\documentclass[11pt,english]{article}
\usepackage{amsfonts,amsmath,amsthm,amscd,amssymb,latexsym}

    \usepackage[T2A]{fontenc}
    \usepackage[cp1251]{inputenc}
    \usepackage{graphicx}

\textheight20cm \textwidth12cm

\begin{document}

\title{\bf On the lower bounds of
$p$-modulus of families of paths and a finite connectedness}

\author{{\bf Evgeny Sevost'yanov}, {\bf Zarina Kovba}, \\ {\bf Heorhii
Nosal,} {\bf Nataliya Ilkevych}}

\theoremstyle{plain}
\newtheorem{theorem}{Theorem}[section]
\newtheorem{lemma}{Lemma}[section]
\newtheorem{proposition}{Proposition}[section]
\newtheorem{corollary}{Corollary}[section]
\theoremstyle{definition}

\newtheorem{example}{Example}[section]
\newtheorem{remark}{Remark}[section]
\newcommand{\keywords}{\textbf{Key words.  }\medskip}
\newcommand{\subjclass}{\textbf{MSC 2000. }\medskip}
\renewcommand{\abstract}{\textbf{Abstract.}\medskip}
\numberwithin{equation}{section}

\setcounter{section}{0}
\renewcommand{\thesection}{\arabic{section}}
\newcounter{unDef}[section]
\def\theunDef{\thesection.\arabic{unDef}}
\newenvironment{definition}{\refstepcounter{unDef}\trivlist
\item[\hskip \labelsep{\bf Определение \theunDef.}]}%
{\endtrivlist}

\maketitle

\begin{abstract}
We study the problem of the lower bounds of the modulus of families
of paths of order $p,$ $p>n-1,$ and their connection with the
geometry of domains containing the specified families. Among other
things, we have proved an analogue of N\"akki's theorem on the
positivity of the $p$-modulus of families of paths joining a pair of
continua in the given domain. The geometry of domains with a
strongly accessible boundary in the sense of the $p$-modulus of
families of paths was also studied. We show that domains with a
$p$-strongly accessible boundary with respect to a $p$-modulus,
$p>n-1,$ are are finitely connected at their boundary. The mentioned
result generalizes N\"akki's result, which was proved for uniform
domains in the case of a conformal modulus.
\end{abstract}

\medskip
{\bf Key words:} moduli families of of paths, quasiconformal
mappings

\medskip
{\bf MSC:} 31A15, 30C65

\section{Introduction}

As is known, the lower estimates of the modulus of families of paths
are of significant importance in the study of mappings with finite
distortion. In particular, such estimates are used in the study of
the boundary behavior of mappings, see, e.g.,
\cite[Theorem~13.1]{MRSY}, \cite[Theorem~3.6]{Na$_2$} and
\cite[Theorem~17.15]{Va}. Let us give some definitions. A Borel
function $\rho:{\Bbb R}^n\,\rightarrow [0,\infty] $ is called {\it
admissible} for family $\Gamma$ of paths $\gamma$ in ${\Bbb R}^n,$
if
\begin{equation}\label{eq1.4}
\int\limits_{\gamma}\rho (x)\, |dx|\geqslant 1
\end{equation}
holds for any (locally rectifiable) path $\gamma \in \Gamma.$ %
In this case, we write: $\rho \in {\rm adm} \,\Gamma.$ Set
\begin{equation}\label{eq1A} M(\Gamma)=\inf\limits_{\rho \in
\,{\rm adm}\,\Gamma} \int\limits_{{\Bbb R}^n} \rho^n (x)\,dm(x)\,.
\end{equation}
The quantity $M(\Gamma)$ is called the {\it modulus of the family of
paths $\Gamma.$} Let $E_0, E_1\subset \overline{{\Bbb R}^n},$
$\overline{{\Bbb R}^n}={\Bbb R}^n\cup\{\infty\},$ and let $D$ be a
domain in $\overline{{\Bbb R}^n}, $ $n\geqslant 2.$ Denote by
$\Gamma (E_0, E_1, D)$ the family of all paths joining $E_0$ and
$E_1$ in $D$. The following result was established by R.~N\"{a}kki
(see~\cite[Lemma~1.15]{Na$_2$}).

\medskip
{\bf Theorem~A. (The positivity of the modulus of families of paths
joining a pair of continua)}. {\it Let $D$ be a domain in ${\Bbb
R}^n,$ $n\geqslant 2.$ If $A$ and $A^{\,*}$ are (nondegenerate)
continua in $ D,$ then $M(\Gamma(A, A^{\,*}, D))>0.$}

\medskip
Below, we will extend N\"{a}kki's result to the case of a modulus of
order $p\geqslant 1.$ Given $p\geqslant 1,$ we define {\it
$p$-modulus} of the family $\Gamma $ by the equality
\begin{equation}\label{eq1.3gl0}
M_p(\Gamma)=\inf\limits_{\rho \in \,{\rm adm}\,\Gamma}
\int\limits_{{\Bbb R}^n} \rho^p (x)\,dm(x)\,.
\end{equation}
The following theorem holds.

\medskip
\begin{theorem}\label{th2}
{\it Let $D$ be a domain in ${\Bbb R}^n,$ $n\geqslant 2,$ and let
$p>n-1.$ If $A$ and $A^{\,*}$ are (nondegenerate) continua in $D,$
then $M_p(\Gamma(A, A^{\,*}, D))>0.$}
\end{theorem}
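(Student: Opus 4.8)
The plan is to obtain the lower bound directly from the definition of the $p$-modulus, by exhibiting, for an arbitrary $\rho\in\mathrm{adm}\,\Gamma(A,A^{*},D)$, a \emph{spanning} family of genuine paths of $\Gamma(A,A^{*},D)$ over which the normalization $\int_\gamma\rho\,|dx|\ge 1$ can be averaged. First I would dispose of the degenerate case: if $A\cap A^{*}\neq\varnothing$ there is no admissible $\rho$, so $M_p=+\infty$ and the claim is trivial; hence I may assume $A$ and $A^{*}$ are disjoint continua and set $\tau=\mathrm{dist}(A,A^{*})>0$ and $d=\min\{\mathrm{diam}\,A,\mathrm{diam}\,A^{*}\}>0$. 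Note that minorization by subpaths is useless here, since it only yields \emph{upper} bounds; the lower bound must come from paths that are full members of $\Gamma$ and that fill out a region.

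The core construction is a tube $\Omega\subset D$ joining $A$ to $A^{*}$, foliated by an $(n-1)$-parameter family $\{\gamma_\alpha\}_{\alpha\in T}$ of essentially disjoint paths, each running from $A$ to $A^{*}$ inside $D$. Near $A$ the paths are modeled on the cylinder picture: projecting $A$ onto a line through two points realizing its diameter yields a segment $[0,\ell]$, $\ell=\mathrm{diam}\,A$, over whose slices I let the paths leave $A$ transversally, so that in local coordinates $x=(t,y)$, $t\in[0,\ell]$, $y\in\mathbb R^{n-1}$, the foliation is $\gamma_{t,\theta}(s)=(t,s\theta)$ and the substitution $y=s\theta$ produces the weight $w(x)=|y|^{-(n-2)}$. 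The same is arranged near $A^{*}$, while on the bounded middle portion of $\Omega$ the paths carry a bounded Jacobian since no crowding occurs there. Writing the change-of-variables identity and using $\int_{\gamma_\alpha}\rho\ge 1$ gives
$$
\mu(T)\ \le\ \int_{T}\Big(\int_{\gamma_\alpha}\rho\,|dx|\Big)\,d\mu(\alpha)\ =\ \int_{\Omega}\rho(x)\,w(x)\,dm(x),
$$
and then Hölder's inequality yields, with $p'=\tfrac{p}{p-1}$,
$$
\mu(T)\ \le\ \Big(\int_{\Omega}\rho^{p}\,dm\Big)^{1/p}\Big(\int_{\Omega}w^{p'}\,dm\Big)^{1/p'}.
$$

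The hypothesis $p>n-1$ enters exactly here. Near each continuum the weight contributes a radial integral of the type $\int_{0}s^{-(n-2)p'}\,s^{n-2}\,ds=\int_{0}s^{-(n-2)/(p-1)}\,ds$, which is finite precisely when $(n-2)/(p-1)<1$, i.e. $p>n-1$; away from the continua $w$ is bounded and $\Omega$ is bounded, so $\int_\Omega w^{p'}\,dm<\infty$. Consequently $\int_{\mathbb R^n}\rho^{p}\,dm\ge\int_\Omega\rho^p\,dm\ge \mu(T)^{p}\big(\int_\Omega w^{p'}\,dm\big)^{-(p-1)}>0$, a bound independent of $\rho$, which is the assertion. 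The nondegeneracy of \emph{both} continua is used: were $A^{*}$ a single point, the near-$A^{*}$ weight would carry exponent $(n-1)/(p-1)$ and the integral would require $p>n$, matching the fact that points have positive $p$-capacity only for $p>n$.

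The main obstacle is the construction of the foliating family for \emph{arbitrary} continua inside an \emph{arbitrary} domain $D$: realizing the cylinder model around a non-smooth continuum (which need not contain any arc) requires a measurable selection of slice points $a(t)\in A\cap\{x_1=t\}$ together with transversal paths that are pairwise essentially disjoint, and the two transversal bundles must be joined across the middle of $D$ by paths that stay in $D$ and keep the Jacobian bounded there. I expect the cleanest route is to replace $A$ and $A^{*}$ by polygonal approximations inside thin neighborhoods contained in $D$, to carry out the foliation over those, and to estimate $w$ by the distance to the approximating segments; the convergence computation $\int_0 s^{-(n-2)/(p-1)}\,ds<\infty$ for $p>n-1$ is then the only place where the assumption on $p$ is consumed, everything else reducing to making the transversal bundles disjoint and measurable.
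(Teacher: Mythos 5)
Your local analysis is correct: the weight $w=|y|^{-(n-2)}$, the H\"older step, and the arithmetic $\int_0 s^{-(n-2)/(p-1)}\,ds<\infty\iff p>n-1$ are exactly right, as is your closing remark on why both continua must be nondegenerate. But the proposal has a genuine gap, and it is precisely the step you defer to the last paragraph: the foliating family is never constructed, and for arbitrary continua in an arbitrary domain that construction \emph{is} the theorem, not a technicality. Concretely: the slice selection $t\mapsto a(t)\in A\cap\{x_1=t\}$ is in general only measurable, so the ends of your transversal bundle near $A$ sit on a discontinuously varying family of $(n-2)$-spheres, and you must then route an $(n-1)$-parameter family of essentially disjoint curves from these ends, through a possibly narrow and twisted channel of $D$, to the equally discontinuous bundle at $A^{*}$, keeping multiplicity and crowding in $L^{p'}$ -- nothing in the proposal indicates how. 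Worse, the repair you propose (carrying out the foliation over polygonal approximations of $A$ and $A^{*}$) is unsound: admissibility of $\rho$ gives $\int_\gamma\rho\,|dx|\geqslant 1$ \emph{only} for curves whose endpoints actually lie on $A$ and on $A^{*}$; curves landing on approximating polygons carry no normalization at all, and there is no general inequality relating $M_p(\Gamma(A,A^{*},D))$ to the modulus for the approximations. Such a comparison is a nontrivial lemma in its own right, not a reduction you may assume.

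It is instructive that the paper's proof is organized exactly so as to avoid this gluing problem. It first proves a deformation lemma (Lemma~\ref{lem2}): a homeomorphism $f$ of $\overline{D}$ with $M_p(f(\Gamma))\leqslant K_{n,p}M_p(\Gamma)$ for all families $\Gamma$, which moves a point of $A$ to a prescribed point at distance $r$ from a point $a^{*}\in A^{*}$ while fixing everything outside a subdomain disjoint from $A^{*}$. After this deformation both $f(A)$ and $A^{*}=f(A^{*})$ meet every sphere $S(a^{*},t)$, $r<t<2r$, with $B(a^{*},2r)\subset D$, and then Caraman's annulus estimate (Proposition~\ref{pr1}) yields the positive lower bound, which transfers back through $f$ at the cost of the factor $K_{n,p}^{-1}$. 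The point is that Proposition~\ref{pr1} is a sphere-by-sphere statement: on each sphere $S(a^{*},t)$ one only needs the positivity of the $p$-modulus of curves joining two \emph{points} of an $(n-1)$-dimensional sphere, where a meridian-type foliation fans out from both endpoints simultaneously, so no gluing of two separate bundles ever arises; this is also where $p>n-1$ enters, in the same exponent computation you performed. To complete your direct construction you would in effect have to reprove both ingredients; the cleanest way to finish your plan is the paper's reduction to the concentric-spheres configuration via a controlled-distortion homeomorphism.
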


\medskip
In this manuscript, separate studies are devoted to the geometry of
so-called domains with strongly accessible boundaries. These
boundaries are significantly used in the study of the boundary
behavior of mappings. In particular, quasiconformal mappings of
domains locally connected at their boundary onto domains which have
a strongly accessible boundary have a continuous boundary extension
(see, e.g., \cite[Theorem~17.15]{Va}, \cite[Theorem~3.6]{Na$_2$}).

\medskip Let $p\geqslant 1$ and let $D$ be a domain in
${\Bbb R}^n,$ $n\geqslant 2.$ We say that $\partial D$ is {\it
strongly accessible at the point $x_0\in
\partial D$ with respect to $p$-modulus,} if for any neighborhood $U$
of $x_0$ there is a compact set $E\subset D,$ a neighborhood
$V\subset U$ of $x_0$ and a number $\delta>0$ such that
\begin{equation}
\label{eq1.3_a} M_p(\Gamma(E,F, D))\geqslant\delta
\end{equation}
for any continuum $F$ in~$D$ with $F\cap \partial U\ne\varnothing\ne
F\cap \partial V.$ The boundary $\partial D$ of a domain $D$ in
${\Bbb R}^n,$ $n\geqslant 2,$ is {\it strongly accessible at the
point $x_0\in
\partial D$,} if it is strongly accessible with respect
to $n$-modulus at this point.

\medskip The following statement was proved in \cite[Lemma~3.8.1]{Sev$_1$}
for the case $p=n;$ see also the classical N\"{a}kki's result
in~\cite[Theorem~1.16]{Na$_2$} for the conformal modulus. (We need
to clarify that N\"{a}kki used a slightly different boundary
condition in the above paper. It is slightly different from what we
call strong accessibility).

\medskip
\begin{theorem}\label{th3}{\it\, Let $D$ be a domain in ${\Bbb R}^n,$
$n\geqslant 2,$ let $p>n-1$ and let $x_0\in
\partial D,$ $x_0\ne \infty.$ Assume that, for any neighborhood
$U$ of $x_0$ there is a compact set $E\subset D,$ a neighborhood
$V\subset U$ of $x_0$ and a number $\delta>0$ such that the relation
\begin{equation}\label{eq17***}
M_p(\Gamma(E,F, D))\geqslant \delta
\end{equation}
holds for any continuum $F$ in $D$ with $F\cap \partial
U\ne\varnothing\ne F\cap \partial V.$ Then the following holds: for
any neighborhood $U$ of $x_0$ and any continuum $E^{\,*}\subset D$
there is a neighborhood $V\subset U$ of $x_0$ and a number
$\delta^{\,*}>0$ such that
\begin{equation}\label{eq20***} M_p(\Gamma(E^{\,*}, F,
D))\geqslant \delta^{\,*}
\end{equation}
for any continuum $F$ in $D$ with $F\cap \partial U\ne\varnothing\ne
F\cap \partial V.$ }
\end{theorem}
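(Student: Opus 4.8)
The plan is to argue by contradiction and to reduce everything to three $p$-moduli that are tied together by one Lipschitz-type function. Suppose the conclusion fails for some neighborhood $U$ of $x_0$ and some continuum $E^{\,*}\subset D$. Applying the hypothesis to this $U$, I obtain a compact set $E\subset D$, a neighborhood $V_1\subset U$ and $\delta_1>0$ with $M_p(\Gamma(E,F,D))\geqslant\delta_1$ whenever $F$ is a continuum meeting $\partial U$ and $\partial V_1$. Enlarging $E$ to a nondegenerate continuum inside $D$ (this only enlarges $\Gamma(E,F,D)$ and hence preserves the bound) I may assume $E$ is a continuum, so Theorem~\ref{th2} gives $\sigma:=M_p(\Gamma(E^{\,*},E,D))\in(0,\infty)$. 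Choosing neighborhoods $V_k\subset V_1$ shrinking to $x_0$ and using the negated conclusion with $\delta^{\,*}=1/k$, I get continua $F_k$ with $F_k\cap\partial U\ne\varnothing\ne F_k\cap\partial V_k$ and $M_p(\Gamma(E^{\,*},F_k,D))<1/k$. Since each $F_k$ joins a point of $\partial U$ to a point of $\partial V_k\subset V_1$, it must also meet $\partial V_1$, so $M_p(\Gamma(E,F_k,D))\geqslant\delta_1$ for large $k$. Thus I would have three families whose moduli satisfy $M_p(\Gamma(E^{\,*},F_k,D))\to0$, $M_p(\Gamma(E,F_k,D))\geqslant\delta_1$ and $M_p(\Gamma(E^{\,*},E,D))=\sigma>0$, and the aim is to see that this is impossible.

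To link them I fix near-extremal $\rho_k\in\mathrm{adm}\,\Gamma(E^{\,*},F_k,D)$ with $\int_D\rho_k^{\,p}\,dm\to0$ and introduce the truncated $\rho_k$-distance to $E^{\,*}$, namely $u_k(x):=\min\{1,\ \inf_\gamma\int_\gamma\rho_k\,|dx|\}$, the infimum taken over paths $\gamma$ in $D$ from $x$ to $E^{\,*}$. By construction $\rho_k$ is an upper gradient of $u_k$, that is $|u_k(x)-u_k(y)|\leqslant\int_\gamma\rho_k\,|dx|$ along any path $\gamma$ joining $x$ and $y$; moreover $u_k\equiv0$ on $E^{\,*}$ and, by admissibility of $\rho_k$, $u_k\equiv1$ on $F_k$. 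The behaviour of $u_k$ on $E$ then drives the argument. If $u_k\geqslant\tfrac12$ everywhere on $E$, then $2\rho_k\in\mathrm{adm}\,\Gamma(E^{\,*},E,D)$, whence $\sigma\leqslant 2^{\,p}\int_D\rho_k^{\,p}\,dm\to0$, contradicting $\sigma>0$. If instead $u_k\leqslant\tfrac12$ everywhere on $E$, then along any $\gamma\in\Gamma(E,F_k,D)$ the function $u_k$ increases from at most $\tfrac12$ to $1$, so $\int_\gamma\rho_k\geqslant\tfrac12$; hence $2\rho_k\in\mathrm{adm}\,\Gamma(E,F_k,D)$ and $\delta_1\leqslant2^{\,p}\int_D\rho_k^{\,p}\,dm\to0$, again a contradiction.

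The essential obstacle is the remaining mixed regime, where $E$ carries both points at which $u_k$ is small and points at which it is large, so that the naive admissible functions above fail: the paths of $\Gamma(E,F_k,D)$ issuing from the far set $Z_k:=\{x\in E:u_k(x)>\tfrac12\}$ need not be charged by $\rho_k$, and $Z_k$ may itself contain nondegenerate continua, so a single application of Theorem~\ref{th2} does not help (its positivity is not uniform as $F_k\to\partial D$). My plan here is to control $\Gamma(Z_k,F_k,D)$ by a co-area estimate on the level sets $\{u_k=s\}$, $s\in(\tfrac12,1)$: their averaged $(n-1)$-dimensional energy satisfies $\int_{1/2}^{1}\!\big(\int_{\{u_k=s\}}\rho_k^{\,p-1}\,d\mathcal H^{n-1}\big)\,ds\leqslant\int_D\rho_k^{\,p}\,dm\to0$, so a good separating level of small surface energy can be selected and, after re-running the dichotomy on the resulting subcontinua of $E$, converted into smallness of $M_p(\Gamma(Z_k,F_k,D))$. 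It is exactly at this step that the hypothesis $p>n-1$ is indispensable, since it makes the $(n-1)$-dimensional surface integrals $\int_{\{u_k=s\}}\rho_k^{\,p-1}\,d\mathcal H^{n-1}$ the correct quantities and lets small surface energy bound the modulus. Combining this with the clean cases and the countable subadditivity of the $p$-modulus forces $M_p(\Gamma(E,F_k,D))\to0$, contradicting $M_p(\Gamma(E,F_k,D))\geqslant\delta_1$ and closing the proof; the remaining manipulations of admissible functions are routine.
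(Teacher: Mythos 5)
Your reduction to the sequence $F_k$, the function $u_k$, and the two ``clean'' cases of the dichotomy are all correct, but they are also the easy part; the mixed regime that you flag as the essential obstacle is precisely where the whole content of the theorem sits, and your plan for it does not close. Two concrete problems. First, the target you set there is unreachable from the data you have: smallness of $M_p(\Gamma(Z_k,F_k,D))$ cannot be extracted from $\rho_k$, because both $Z_k=\{x\in E:\,u_k(x)>\tfrac12\}$ and $F_k$ lie in the region $\{u_k\geqslant\tfrac12\}$, where admissibility of $\rho_k$ for $\Gamma(E^{\,*},F_k,D)$ imposes no constraint at all on curves that stay there; an upper bound for a $p$-modulus requires exhibiting an admissible function for that family, and you have none. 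If instead you try to route the contradiction through the pair $(E^{\,*},Z_k)$ (whose paths $2\rho_k$ does charge), you need a lower bound for $M_p(\Gamma(E^{\,*},Z_k,D))$ that is uniform in $k$, and Theorem~\ref{th2} does not supply one, since $Z_k$ varies with $k$ and may degenerate --- as you yourself observe. Second, the co-area step is not a proof: a single level set $\{u_k=s\}$ with small surface energy $\int_{\{u_k=s\}}\rho_k^{\,p-1}\,d\mathcal{H}^{n-1}$ gives no upper bound on any curve modulus (in Ziemer-type duality an upper bound for the curve modulus corresponds to a lower bound for the modulus of the family of \emph{all} separating surfaces, not to smallness of the energy of one of them); worse, $\{u_k=s\}$ with $s\in(\tfrac12,1)$ need not even separate $Z_k$ from $F_k$, since points of $Z_k$ may have $u_k>s$. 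Your remark on where $p>n-1$ enters is also off the mark: in the actual proof it enters through Theorem~\ref{th2} and through Caraman's ring estimate (Proposition~\ref{pr1}), not through any surface-integral mechanism.

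The paper closes exactly this gap by a localization that your scheme lacks. One covers $E$ by finitely many closed balls $E_1,\dots,E_q$ of radius $r$ with $4r\leqslant\min\{{\rm dist\,}(E,E^{\,*}),\,{\rm dist\,}(E,\partial D),\,{\rm dist\,}(E,B(x_0,t_0))\}$; by subadditivity some ball, say $E_1=\overline{B(e_1,r)}$, satisfies $M_p(\Gamma(E_1,F,D))\geqslant\delta/q$. Then, for an \emph{arbitrary} $\rho\in{\rm adm\,}\Gamma(E^{\,*},F,D)$ (the paper's argument is direct, not by contradiction), one runs a three-way alternative with constant $\tfrac13$: either every $\gamma_1\in\Gamma(E_1,F,D)$ has $\int_{\gamma_1}\rho\,|dx|\geqslant\tfrac13$, or every $\gamma_1^{\,*}\in\Gamma(E_1,E^{\,*},D)$ does (these two cases are handled by $M_p(\Gamma(E_1,F,D))\geqslant\delta/q$ and by Theorem~\ref{th2} respectively), or else there exist $\gamma_1$ and $\gamma_1^{\,*}$ with both integrals $<\tfrac13$. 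In this last case --- your mixed regime --- every path $\alpha$ joining $|\gamma_1|$ to $|\gamma_1^{\,*}|$ closes up, via subpaths of $\gamma_1$ and $\gamma_1^{\,*}$, into a path from $E^{\,*}$ to $F$, whence $\int_{\alpha}\rho\,|dx|\geqslant\tfrac13$; and since $|\gamma_1|$ and $|\gamma_1^{\,*}|$ both cross every sphere of the ring $A(e_1,r,2r)$ (or, when $F\cap B(e_1,2r)\ne\varnothing$, one uses $F$ and $|\gamma_1^{\,*}|$ in $A(e_1,2r,4r)$), Proposition~\ref{pr1} bounds the $p$-modulus of that connecting family from below by a constant depending only on $n$, $p$ and $r$. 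Thus in every case $\int_D\rho^p\,dm\geqslant\delta^{\,*}$ for an explicit $\delta^{\,*}>0$. The missing idea, in one phrase, is to shrink $E$ to a ball so small relative to ${\rm dist\,}(E,E^{\,*})$ that any two ``uncharged'' curves emanating from it --- one toward $F$, one toward $E^{\,*}$ --- are forced to generate a well-charged ring family to which the geometric lower bound of Proposition~\ref{pr1} applies; the distance-function dichotomy alone cannot substitute for this.
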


\medskip
Observe that, the sa-called uniform domains have strongly accessible
boundaries (see, e.g., \cite[Section~2.4]{NP}). Given $p\geqslant
1,$ we say that a domain $D\subset {\Bbb R}^n,$ $n\geqslant 2,$ is
uniform with a respect $p$-modulus, if for any $r>0$ there exists
$\delta>0$ such that
\begin{equation}\label{eq17***A}
M_p(\Gamma(F^{\,*},F, D))\geqslant \delta
\end{equation}
holds for any connected sets $F, F^*\subset D$ such that
$h(F)\geqslant r$ and $h(F^{\,*})\geqslant r.$ Here $h$ denotes the
{\it spherical distance in $\overline{{\Bbb R}^n}$:}
\begin{equation*}\label{eqA17}h(x,\infty)=\frac{1}{\sqrt{1+{|x|}^2}},
\ \ h(x,y)=\frac{|x-y|}{\sqrt{1+{|x|}^2} \sqrt{1+{|y|}^2}}\,, \ \
x\ne \infty\ne y\,.\end{equation*}
Given $E\,\subset\,\overline{{\Bbb R}^n}$ we set
\begin{equation}\label{chdiam}
h(E)=\sup\limits_{x\,,y\,\in\,E}\,h(x,y)\,.
\end{equation}
A domain $D$ is {\it uniform}, if it is uniform with a respect to
$n$-modulus. R.~N\"{a}kki proved that uniform domains are finitely
connected at their boundary, see~\cite[Theorem~6.4]{Na$_1$}.

\medskip
{\bf Theorem~B. (On the finite connectedness of uniform domains on
the boundary).} {\it Let $D$ be a domain in ${\Bbb R}^n,$
$n\geqslant 2,$ which has a strongly accessible boundary at $x_0\in
\partial D.$ Then $D$ is finitely connected at the point $x_0,$
in other words, for any neighborhood $U$ of $x_0$ there exists a
neighborhood $V\subset U$ of the same point such that $V\cap D$ has
a finite number of components.}

\medskip
We extend N\"{a}kki's result to a wider class of domains that have a
strongly accessible boundary with respect to $p$-modulus, $p>n-1.$
In the form in which this statement is given here, it is new even
for $p=n.$

\medskip
\begin{theorem}\label{th5}
{\it Let $D$ be a domain in ${\Bbb R}^n,$ $n\geqslant 2,$ which has
strongly accessible boundary at the point $x_0\in
\partial D\setminus\{\infty\}$ with respect to $p$-modulus,
$p> n-1.$ Then $D$ is finitely connected at the point $x_0,$ in
other words, for any neighborhood $U$ of $x_0$ there exists a
neighborhood $V\subset U$ of the same point such that $V\cap D$ has
a finite number of components.}
\end{theorem}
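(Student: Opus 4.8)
I will argue by contradiction. Suppose $D$ is \emph{not} finitely connected at $x_0$. By the negation of the definition there is a neighborhood $U_0$ of $x_0$ such that $V\cap D$ has infinitely many components for \emph{every} neighborhood $V\subseteq U_0$. Fixing $y_0\in D$ and taking $R<|y_0-x_0|$ small enough that $\overline{B(x_0,R)}\subset U_0$, the ball $U:=B(x_0,R)$ is then a neighborhood for which $D\cap B(x_0,r)$ has infinitely many components for all $r\le R$; moreover $y_0\in D\setminus\overline U$, so there is room to place a continuum outside $\overline U$. Since the hypothesis of Theorem~\ref{th5} is exactly the condition~\eqref{eq17***} required in Theorem~\ref{th3}, I may invoke Theorem~\ref{th3}: choosing a fixed continuum $E^{\,*}\subset D\setminus\overline U$ (near $y_0$), I obtain a ball $V=B(x_0,r_1)\subset U$ and a number $\delta^{\,*}>0$ such that $M_p(\Gamma(E^{\,*},F,D))\ge\delta^{\,*}$ for every continuum $F$ in $D$ crossing $\partial U=S(x_0,R)$ and $\partial V=S(x_0,r_1)$. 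The point of placing $E^{\,*}$ strictly outside $\overline U$ is that every path from $E^{\,*}$ to a set inside $B(x_0,r_1)$ must traverse the whole spherical ring $A:=\{r_1<|x-x_0|<R\}$.

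\textbf{Topological extraction.} First I record the \emph{island principle}: since $D$ is connected, no component of $D\cap B(x_0,r)$ can have its closure disjoint from $S(x_0,r)$ (otherwise it would be a nonempty proper clopen subset of $D$), so every such component meets $S(x_0,r)$. Using this together with the fact that $D\cap B(x_0,r)$ has infinitely many components for all small $r$, I extract, after passing to a suitable scale (either infinitely many components of $D\cap B(x_0,R)$ already reach $\overline{B(x_0,r_1)}$, hence cross the ring $A$; or almost all small-scale components lie in a single component of $D\cap B(x_0,R)$, and one recurses inward on the nested scales $R>r_1>r_2>\cdots\to0$), infinitely many pairwise disjoint crossing components $G_1,G_2,\dots$ of $D\cap A$, each meeting both $S(x_0,r_1)$ and $S(x_0,R)$. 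Put $F_k:=\overline{G_k}$; these are nondegenerate continua in $D$ crossing $\partial U$ and $\partial V$, so by the previous paragraph $M_p(\Gamma(E^{\,*},F_k,D))\ge\delta^{\,*}$ for every $k$.

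\textbf{The quantitative contradiction.} The $p$-modulus of the family of paths joining the two boundary spheres of the ring $A$ is finite for $p>1$ (an explicit radial computation; here $p>n-1\ge1$ guarantees $p>1$), say $M_p(\Gamma(S(x_0,r_1),S(x_0,R);A))=M_0<\infty$. Because the $G_k$ are pairwise disjoint open sets, the crossing families $\Gamma(S(x_0,r_1),S(x_0,R);G_k)$ are separate, whence $\sum_k M_p(\Gamma(S(x_0,r_1),S(x_0,R);G_k))\le M_0<\infty$, and these crossing moduli tend to $0$. The aim is then to dominate the connecting families by the crossing ones, i.e.\ to show $M_p(\Gamma(E^{\,*},F_k,D))\le C\,M_p(\Gamma(S(x_0,r_1),S(x_0,R);G_k))\to0$, which contradicts the uniform lower bound $\delta^{\,*}$ and completes the proof. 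The mechanism is that, with $E^{\,*}$ outside $\overline U$, any path to an inner point of $F_k$ crosses $A$ and its terminal ring-crossing lies in $G_k$, so it traverses the thin tube $G_k$ over radial length $\ge R-r_1$; the extremal metric $\rho_k=(R-r_1)^{-1}\chi_{G_k}$ then yields $M_p\le(R-r_1)^{-p}\,m(G_k)$, and $\sum_k m(G_k)\le m(A)<\infty$.

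\textbf{Main obstacle.} The delicate point is precisely this last domination for \emph{all} connecting paths. Paths reaching $F_k$ at its outer contact $\overline{G_k}\cap S(x_0,R)$ need not traverse the ring and are not controlled by the crossing modulus of $G_k$ alone; for $p>n$ a shrinking outer aperture does not by itself force small modulus. Overcoming this is where the geometry must be used carefully: one exploits that the outer contacts $\overline{G_k}\cap S(x_0,R)$ are pairwise disjoint and that, $E^{\,*}$ being separated from $x_0$ by the full ring, reaching $F_k$ from $E^{\,*}$ always funnels through the single component $G_k$, so the relevant connecting families are again separated by the disjoint $G_k$ and their total modulus is bounded by a fixed ring modulus. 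Matching the uniform lower bound $\delta^{\,*}$ (which counts reaching $F_k$ \emph{anywhere}) against this finite separated upper bound is the crux of the argument; the role of $p>n-1$ is twofold, ensuring the positivity and lower bounds of Theorems~\ref{th2}--\ref{th3} (so that $\delta^{\,*}>0$ is available) and the finiteness $M_0<\infty$ of the ring modulus that drives the contradiction.
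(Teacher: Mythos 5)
Your skeleton (argue by contradiction, use Theorem~\ref{th3} to get a uniform lower bound $\delta^{\,*}$ with a continuum $E^{\,*}$ placed far from $x_0$, then kill it with a characteristic-function test metric on pairwise disjoint components) is the right one, but there is a genuine gap at the decisive step, and it is exactly the one you flag yourself and do not close. Because you take $F_k=\overline{G_k}$ with $G_k$ a component of $D\cap A$, $A=\{r_1<|x-x_0|<R\}$, the continuum $F_k$ reaches the outer sphere $S(x_0,R)$, while $E^{\,*}$ sits just outside $\overline{B(x_0,R)}$. A path of $\Gamma(E^{\,*},F_k,D)$ may join $E^{\,*}$ to a point of $\overline{G_k}\cap S(x_0,R)$ while staying in $D\setminus B(x_0,R)$ except for its endpoint, or while making only a short incursion into the ring; such a path picks up arbitrarily little $\rho_k$-length, so $\rho_k=(R-r_1)^{-1}\chi_{G_k}$ is \emph{not} admissible and the bound $M_p(\Gamma(E^{\,*},F_k,D))\leqslant (R-r_1)^{-p}m(G_k)$ is unjustified. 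Your proposed repair (``reaching $F_k$ from $E^{\,*}$ always funnels through $G_k$, so the connecting families are separated by the disjoint $G_k$'') fails for the same reason: the offending paths live outside the ring, in a region common to all $k$, so their moduli do not add; and for $p>n$ even the family of paths from $E^{\,*}$ to a single outer contact point has positive $p$-modulus, so no shrinking-aperture argument disposes of them. A secondary flaw: $\overline{G_k}$ need not be contained in $D$, whereas condition~(\ref{eq1.3_a}) is stated for continua lying in $D$.

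What is missing is a buffer of scales, and that is how the paper proceeds. Fix $r<r_0$ and a neighborhood $U$ with $\overline{U}\subset B(x_0,r)$, take $V\subset U$ from strong accessibility, so that both spheres the test continua must cross lie inside $B(x_0,r)$; take the disjoint components $E_k$ to be components of $D\cap B(x_0,r_0)$ of the \emph{larger} ball which meet $\partial V$, and use Theorem~\ref{th3} to replace the compact set from the definition by a continuum $F$ lying \emph{outside} $B(x_0,r_0)$. The test continuum $F_k$ is then a subarc of a path, contained in $E_k\cap\overline{U}$ (hence a continuum in $D$, at distance at least $r_0-r$ from $S(x_0,r_0)$) and crossing $\partial U$ and $\partial V$. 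Now every path from $F_k$ to $F$ must stay in the single component $E_k$ until it leaves $B(x_0,r_0)$ (a connected subset of $D\cap B(x_0,r_0)$ meeting $E_k$ lies in $E_k$), hence runs a length at least $r_0-r$ inside $E_k$; this is what makes $\frac{1}{r_0-r}\chi_{E_k}$ admissible for $\Gamma(F_k,F,D)$ and gives $M_p(\Gamma(F_k,F,D))\leqslant m(E_k)/(r_0-r)^p$. Choosing a subsequence with $m(E_k)<1/k$ (the $E_k$ are disjoint subsets of a ball of finite measure) sends this to $0$, contradicting the uniform lower bound. Your argument becomes correct once you insert this intermediate scale and replace the closures $\overline{G_k}$ by such subarcs confined to the smaller ball.
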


\section{On $p$-modulus of families of paths joining two continua}

Let $x_0\in\overline{D},$ $x_0\ne\infty,$
$$
B(x_0, r)=\{x\in {\Bbb R}^n: |x-x_0|<r\}\,, \quad {\Bbb B}^n:=B(0,
1)\,,$$
\begin{equation}\label{eq1ED}S(x_0,r) = \{ x\,\in\,{\Bbb R}^n : |x-x_0|=r\}\,,
\end{equation}
$${\Bbb B}^{n}:=B(0, 1)\,,$$
\begin{equation}\label{eq1**A} A=A(x_0, r_1, r_2)=\{ x\,\in\,{\Bbb R}^n :
r_1<|x-x_0|<r_2\}\,.
\end{equation}
In what follows, $\Omega_n$ denotes the volume of the unit ball
${\Bbb B}^n$ in ${\Bbb R}^n.$
Let $E_0,$ $E_1$ be sets in $D\subset {\Bbb R}^n$. The following
estimate holds (see \cite[Theorem~4]{Car94}).

\medskip
\begin{proposition}\label{pr1}
{\it Let $A(0, a, b)=\{a<|x|<b\}$ be a ring containing in $D\subset
{\Bbb R}^n$ such that $S(0, r)$ intersects $E_0$ and $E_1$ for any
$r\in (a,b)$ where $E_0\cap E_1=\varnothing.$ Then for any
$p\in(n-1, n)$
\begin{equation*}
M_p(\Gamma(E_0, E_1, D))\geqslant
\frac{2^nb_{n,p}}{n-p}(b^{n-p}-a^{n-p})\,,
\end{equation*}
where $b_{n,p}$ is a constant depending only $n$ and $p$.}
\end{proposition}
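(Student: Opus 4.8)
The plan is to estimate $\int_{{\Bbb R}^n}\rho^p\,dm$ from below for an arbitrary $\rho\in{\rm adm}\,\Gamma(E_0,E_1,D)$, by slicing the ring $A(0,a,b)$ into the spheres $S(0,r)$, $r\in(a,b)$, and exploiting the fact that each such sphere already carries a whole family of connecting paths. First I would fix $r\in(a,b)$. Since $S(0,r)\subset A\subset D$ meets both $E_0$ and $E_1$, every rectifiable arc lying on $S(0,r)$ that joins $E_0\cap S(0,r)$ to $E_1\cap S(0,r)$ belongs to $\Gamma(E_0,E_1,D)$; hence the restriction $\rho|_{S(0,r)}$ is admissible for the family $\Gamma_r$ of all such arcs. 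By the definition of the $p$-modulus computed intrinsically on the $(n-1)$-dimensional sphere $S(0,r)$, this yields
\[
\int\limits_{S(0,r)}\rho^p\,d\mathcal H^{n-1}\;\geqslant\;M_p^{(n-1)}(\Gamma_r)\,.
\]

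The key step is a uniform positive lower bound for $M_p^{(n-1)}(\Gamma_r)$. By monotonicity of the modulus, $\Gamma_r$ contains the family joining two individual points $a_0\in E_0\cap S(0,r)$ and $a_1\in E_1\cap S(0,r)$, so $M_p^{(n-1)}(\Gamma_r)$ dominates the $p$-modulus of the family of arcs joining a pair of points of $S(0,r)$. A scaling argument $x\mapsto x/r$ reduces this to the unit sphere and produces the factor $r^{\,n-1-p}$: one checks that $M_p^{(n-1)}(\Gamma_r)\geqslant c\,r^{\,n-1-p}$, where $c=c(n,p)>0$ is the $p$-modulus of the family of arcs joining two points on the unit sphere $S^{n-1}$. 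Here the hypothesis $p>n-1$ is decisive, since the $p$-modulus of paths joining two points of an $(n-1)$-dimensional manifold is strictly positive precisely when $p>n-1$ (and would vanish for $p\leqslant n-1$). Taking the worst position of the pair (antipodal points) gives a constant depending only on $n$ and $p$, uniform in $r$ and in the choice of $a_0,a_1$; this is the $b_{n,p}$ (up to the normalizing factor $2^n$) appearing in the statement.

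Finally I would integrate over $r$ by the coarea formula, $\int_{{\Bbb R}^n}\rho^p\,dm=\int_0^\infty\bigl(\int_{S(0,r)}\rho^p\,d\mathcal H^{n-1}\bigr)\,dr$, keeping only $r\in(a,b)$, to obtain
\[
\int\limits_{{\Bbb R}^n}\rho^p\,dm\;\geqslant\;\int\limits_a^b M_p^{(n-1)}(\Gamma_r)\,dr\;\geqslant\;2^nb_{n,p}\int\limits_a^b r^{\,n-1-p}\,dr\;=\;\frac{2^nb_{n,p}}{n-p}\bigl(b^{\,n-p}-a^{\,n-p}\bigr)\,,
\]
where the last equality uses $n-p\in(0,1)$, so that $r^{\,n-1-p}$ is integrable on $(a,b)$. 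Since $\rho$ was an arbitrary admissible function, passing to the infimum yields the claimed bound. The main obstacle is the uniform positivity of the spherical modulus $M_p^{(n-1)}(\Gamma_r)$: one must verify both that $p>n-1$ forces it to be positive and that the bound can be chosen independent of the location (varying with $r$) of the intersection points, together with the measurability of $r\mapsto\int_{S(0,r)}\rho^p\,d\mathcal H^{n-1}$ needed to apply the coarea formula.
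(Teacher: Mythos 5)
Your argument cannot be compared against an in-paper proof, because the paper does not prove Proposition~\ref{pr1} at all: it is quoted as known, with a citation to Caraman \cite[Theorem~4]{Car94}. That said, your proposal reconstructs what is essentially the standard proof of that cited result (it is the $p$-modulus analogue of V\"ais\"al\"a's spherical-ring lemma for the conformal case): restrict an admissible $\rho$ to each sphere $S(0,r)$, observe that arcs on $S(0,r)$ joining $E_0\cap S(0,r)$ to $E_1\cap S(0,r)$ lie in $\Gamma(E_0,E_1,D)$ so that $\rho|_{S(0,r)}$ is admissible for the intrinsic spherical family, bound the spherical modulus below by the two-point modulus via monotonicity, pull out the factor $r^{n-1-p}$ by scaling, and integrate in polar coordinates to get $\int_a^b r^{n-1-p}\,dr=\frac{1}{n-p}(b^{n-p}-a^{n-p})$. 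All of these steps are sound, including the scaling exponent and the direction of the monotonicity inequality. The one place where your write-up is not self-contained is the crux you yourself flag: the uniform positivity of the $p$-modulus, computed on an $(n-1)$-sphere, of curves joining two arbitrary points when $p>n-1$. This is true, but it is a genuinely nontrivial lemma, not a routine verification: one proves it, e.g., by a Morrey--Sobolev (upper gradient) estimate, which gives a lower bound of the form $c(n,p)\,d^{\,n-1-p}$ in terms of the distance $d$ between the points; since $n-1-p<0$, the bound is smallest for antipodal points and hence uniform over positions, which is exactly the uniformity you need. A complete proof would have to include this lemma (or cite it); with it supplied, your argument is correct and is, in substance, the proof behind the reference the authors invoke.
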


\medskip
Assume that, a mapping $f$ has partial derivatives almost everywhere
in $D.$ In this case, we set
$$l\left(f^{\,\prime}(x)\right)\,=\,\min\limits_{h\in {\Bbb R}^n
\backslash \{0\}} \frac {|f^{\,\prime}(x)h|}{|h|}\,,$$
\begin{equation}\label{eq5_a}
\Vert f^{\,\prime}(x)\Vert\,=\,\max\limits_{h\in {\Bbb R}^n
\backslash \{0\}} \frac {|f^{\,\prime}(x)h|}{|h|}\,,
\end{equation}
$$J(x,f)=\det
f^{\,\prime}(x)\,.$$
and define for any $x\in D$ and fixed $p,q$, $p,q\geqslant 1$
\begin{equation}\label{eq0.1.1A}
K_{I, q}(x,f)\quad =\quad\left\{
\begin{array}{rr}
\frac{|J(x,f)|}{{l\left(f^{\,\prime}(x)\right)}^q}, & J(x,f)\ne 0,\\
1,  &  f^{\,\prime}(x)=0, \\
\infty, & {\rm otherwise}
\end{array}
\right.\,,\end{equation}
$$K_{O, p}(x,f)\quad =\quad \left\{
\begin{array}{rr}
\frac{\Vert f^\prime(x)\Vert^p}{|J(x,f)|}, & J(x,f)\ne 0,\\
1,  &  f^{\,\prime}(x)=0, \\
\infty, & {\rm otherwise}
\end{array}
\right.\,.$$
The functions $K_{I, q}$ and $K_{O, p}$ in~(\ref{eq0.1.1A}) are
called {\it inner dilatation of the order $q$} and {\it outher
dilatation of the order $p,$} respectively.

\medskip
Recall that a~mapping $f\colon X\!\rightarrow\! Y$ between measure
spaces $(X,\Sigma,\mu)$
and~$\left(Y,\Sigma^{\,\prime},\mu^{\,\prime}\right)$ has the
(Luzin) $N$-{\it\,property} if the condition $\mu(S)=0$ implies that
$\mu^{\,\prime}(f(S))=0$. A~mapping $f\colon X\!\rightarrow\! Y$ has
the (Luzin) $N^{\,-1}$-{\it\,property} if the condition
$\mu^{\,\prime}(S^{\,\prime})=0$ implies that $\mu(f^{\,-1}(S))=0.$
The following statement holds.

\medskip
\begin{lemma}\label{lem1} {\it\,
Let $f:D\rightarrow {\Bbb R}^n,$ $n\geqslant 2,$ be a homeomorphism
that is differentiable almost everywhere, has $( N)$- and
$(N^{\,-1})$-properties of Luzin. Suppose that $f^{\,-1}\in W_{\rm
loc}^{1, p}(f(D))$ for some $p\geqslant 1.$ Then the relation
\begin{equation}\label{eq5*AA}
M_p(f(\Gamma))\leqslant \int\limits_D K_{I, p}(x, f)\cdot\rho^p(x)\,
dm(x)
\end{equation}
holds for each family of paths $\ Gamma$ in $D$ and any function
$\rho\in {\rm adm\,}\Gamma.$ }
\end{lemma}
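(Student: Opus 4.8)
The plan is to push the admissible metric $\rho$ forward from $\Gamma$ to $f(\Gamma)$ and then convert the resulting $p$-integral back to $D$ by the area formula. Write $g=f^{\,-1}\colon f(D)\to D$; by hypothesis $g\in W^{1,p}_{\rm loc}(f(D))$, and the $(N^{\,-1})$-property of $f$ is precisely the $(N)$-property of $g$. I may assume the right-hand side of~(\ref{eq5*AA}) is finite, otherwise there is nothing to prove. Let $B\subset D$ be the Borel set of points at which $f$ is differentiable with $J(x,f)\ne0$. At such a point the homeomorphism $g$ is differentiable at $y=f(x)$ with $g^{\,\prime}(y)=(f^{\,\prime}(x))^{-1}$, so that $\|g^{\,\prime}(f(x))\|=1/l(f^{\,\prime}(x))$. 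I define the candidate density on $f(D)$ by
$$
\rho_{*}(y)=\rho(g(y))\,\|g^{\,\prime}(y)\|
$$
for $y\in f(B)$, and $\rho_{*}(y)=0$ otherwise; equivalently $\rho_{*}(f(x))=\rho(x)/l(f^{\,\prime}(x))$ for $x\in B$.

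Second, I would show that $\rho_{*}\in{\rm adm}\,f(\Gamma)$ modulo a family of zero $p$-modulus. For $\beta\in f(\Gamma)$ write $\beta=f\circ\gamma$ with $\gamma\in\Gamma$, so that $\gamma=g\circ\beta$. Since $g\in W^{1,p}_{\rm loc}$, by Fuglede's theorem $g$ is absolutely continuous on $p$-almost every path $\beta$, and along such a path the chain rule gives $|(g\circ\beta)^{\,\prime}|\leqslant\|g^{\,\prime}(\beta)\|\,|\beta^{\,\prime}|$ a.e. Hence for $p$-a.e.\ $\beta$
$$
1\leqslant\int\limits_{\gamma}\rho\,|dx|=\int\limits_{g\circ\beta}\rho\,|dx|\leqslant\int\limits_{\beta}\rho(g(y))\,\|g^{\,\prime}(y)\|\,|dy|=\int\limits_{\beta}\rho_{*}\,|dy|.
$$
The exceptional paths are those along which $g$ fails to be absolutely continuous or which meet the image $f(D\setminus B)$ of the nonregular set in an essential way; the first family has zero $p$-modulus by the $W^{1,p}$-assumption, while the second is controlled by the $(N^{\,-1})$-property of $f$ (equivalently, the $(N)$-property of $g$), which forces $f(D\setminus B)$ to be treated as a null set. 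Since $M_{p}$ ignores subfamilies of zero modulus, this yields $M_{p}(f(\Gamma))\leqslant\int_{f(D)}\rho_{*}^{p}\,dm$.

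Finally, I would estimate $\int_{f(D)}\rho_{*}^{p}\,dm$ by the change of variables. Applying the area formula to $f$ (valid since $f$ is differentiable a.e.\ and has the $(N)$-property) with the test function $x\mapsto\rho^{p}(x)/l(f^{\,\prime}(x))^{p}$, and using injectivity,
$$
\int\limits_{f(D)}\rho_{*}^{p}(y)\,dm(y)=\int\limits_{f(B)}\frac{\rho^{p}(g(y))}{l(f^{\,\prime}(g(y)))^{p}}\,dm(y)=\int\limits_{B}\frac{|J(x,f)|}{l(f^{\,\prime}(x))^{p}}\,\rho^{p}(x)\,dm(x),
$$
and the last integrand equals $K_{I,p}(x,f)\,\rho^{p}(x)$ on $B$. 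Since $K_{I,p}\geqslant0$, this is at most $\int_{D}K_{I,p}(x,f)\,\rho^{p}(x)\,dm(x)$, which together with the previous step gives~(\ref{eq5*AA}). I expect the main obstacle to be the second step: establishing admissibility of $\rho_{*}$ up to a $p$-exceptional family, that is, proving that the paths on which $g$ is not absolutely continuous, or along which the nonregular set intervenes, form a family of zero $p$-modulus. This is exactly where the $W^{1,p}_{\rm loc}$-regularity of $f^{\,-1}$ together with both Luzin properties are indispensable, whereas the concluding change of variables is comparatively routine once the regular set $B$ has been isolated.
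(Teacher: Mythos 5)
Your route differs from the paper's: there, the lemma is dispatched in three lines by checking the hypotheses of a known Poletskii-type inequality --- differentiability a.e.\ together with the two Luzin properties gives finite metric distortion by \cite[Corollary~8.1]{MRSY}, while $f^{\,-1}\in W^{1,p}_{\rm loc}(f(D))$ gives absolute continuity of $f^{\,-1}$ on $p$-almost all paths by Fuglede's theorem --- after which the conclusion is quoted from \cite[Theorem~1.1]{SalSev}. You instead reprove the substance of that cited theorem from scratch; your architecture (push $\rho$ forward to $\rho_*(f(x))=\rho(x)/l(f^{\,\prime}(x))$ on the regular set $B$, verify admissibility up to a $p$-exceptional family, return to $D$ by the area formula on $B$) is the correct and standard one, and the pointwise facts you invoke --- in particular that a homeomorphism differentiable at $x$ with $J(x,f)\ne 0$ has an inverse differentiable at $f(x)$ with derivative $(f^{\,\prime}(x))^{-1}$ --- are true.

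The genuine gap is at the step you yourself identify as the crux. You dismiss the exceptional paths meeting $f(D\setminus B)$ by appealing to ``the $(N^{\,-1})$-property of $f$ (equivalently, the $(N)$-property of $g$), which forces $f(D\setminus B)$ to be treated as a null set.'' That is a non sequitur: the $(N)$-property of $g$ says that $g$ maps null subsets of $f(D)$ to null sets; it says nothing about the measure of $f(D\setminus B)$, which is what you actually need. The correct justification splits $D\setminus B=N_1\cup N_2$, where $N_1$ is the non-differentiability set and $N_2$ is the set where $f$ is differentiable with $J(x,f)=0$: then $m(f(N_1))=0$ by the $(N)$-property of $f$ (since $m(N_1)=0$ by the a.e.\ differentiability hypothesis), and $m(f(N_2))=0$ by the Sard-type lemma that a map differentiable at every point of a set on which its Jacobian vanishes sends that set to a null set. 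Only once $m(f(D\setminus B))=0$ is established can you conclude that the paths having positive length in this set form a family of zero $p$-modulus; and this same fact is what legitimizes the identity $\int_\beta\rho(g(y))\,\Vert g^{\,\prime}(y)\Vert\,|dy|=\int_\beta\rho_*\,|dy|$ in your display, which otherwise fails because $\rho_*$ vanishes on $f(D\setminus B)$ while the pathwise derivative of $g\circ\beta$ need not. With this repair your proof closes --- and, incidentally, it then never uses the $(N^{\,-1})$-property at all, whereas the paper's citation route does use it (it is part of what yields finite metric distortion in \cite[Corollary~8.1]{MRSY}).
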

\begin{proof}
Since $f$ is differentiable almost everywhere and has $(N)$- and
$(N^{\,-1})$-Luzin properties, $f$ has finite metric distortion (see
\cite[Corollary~8.1]{MRSY}). Furthermore, since $f^{\,-1}\in W_{\rm
loc}^{1, p}(f(D)),$ by Fuglede's lemma $f^{\,-1}$ is absolutely
continuous on $p$-almost all paths (see, e.g., Theorem~28.2 in
\cite{Va}). In this case, the desired statement follows
by~\cite[Theorem~1.1]{SalSev}.
\end{proof}

\medskip
The following statement generalizes the corresponding N\"{a}kki's
result for a conformal modulus (see~\cite[Lemma~1.14]{Na$_2$}).

\medskip
\begin{lemma}\label{lem2}
{\it\, Let $p\geqslant 1,$ and let $D$ be a domain in ${\Bbb R}^n,$
$n\geqslant 2.$ Let $a_0, a^{\,\prime}_0\in D$ and let $D_0$ be a
subdomain of $D$ which contains $a_0$ and $a^{\,\prime}_0.$ Then
there exists a homeomorphism $f:\overline{D}\rightarrow
\overline{D}$ such that $M_p(f(\Gamma))\leqslant K_{n, p}\cdot
M_p(\Gamma)$ for any family $\Gamma$ in $D$ and some constant $K_{n,
p}>0$ which depends only on $n$ and $p,$ while
$f(a_0)=a^{\,\prime}_0$ and $f(x)=x$ for any $x\in
\overline{D}\setminus D_0.$}
\end{lemma}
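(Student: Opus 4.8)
The plan is to construct $f$ explicitly as a finite composition of elementary ``point-pushing'' homeomorphisms, each supported in a small ball inside $D_0$, and then to convert pointwise dilatation control into the modulus inequality through Lemma~\ref{lem1}. Since $D_0$ is a domain, it is polygonally connected, so I would first fix a polygonal path $P$ joining $a_0$ to $a^{\,\prime}_0$ inside $D_0$ and set $r_0=\mathrm{dist}(P,\partial D_0)>0$. Along $P$ I would choose an ordered chain of points $a_0=y_0,y_1,\dots,y_m=a^{\,\prime}_0$ with consecutive distances $|y_i-y_{i-1}|$ a fixed small fraction of $r_0$, and associate with the $i$-th step a ball $B_i=B(y_{i-1},r)$ with $r<r_0$, so that every $B_i\subset D_0$.

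For the elementary map in $B_i$, fix a radial cut-off $\psi$ with $\psi\equiv 1$ near the center and $\psi\equiv 0$ near $\partial B_i$, and set $g_i(x)=x+\psi\!\left(|x-y_{i-1}|/r\right)(y_i-y_{i-1})$, extended by the identity outside $B_i$. For $|y_i-y_{i-1}|$ small relative to $r$, a direct computation of $g_i^{\,\prime}$ shows that $g_i$ is a bi-Lipschitz self-homeomorphism of $\overline{D}$, equal to the identity on $\overline{D}\setminus B_i$, with $g_i(y_{i-1})=y_i$, and with $\Vert g_i^{\,\prime}(x)\Vert$, $1/l(g_i^{\,\prime}(x))$ and $1/|J(x,g_i)|$ all bounded by constants depending only on $n$. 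Consequently $K_{I,p}(x,g_i)\le c=c(n,p)$ for a.e.\ $x$. Setting $f=g_m\circ\cdots\circ g_1$ gives a homeomorphism with $f(a_0)=a^{\,\prime}_0$ and $f\equiv\mathrm{id}$ on $\overline{D}\setminus D_0$, as required.

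For the modulus estimate I would invoke Lemma~\ref{lem1}: each $g_i$, being bi-Lipschitz, is differentiable a.e., lies in $W^{1,p}_{\mathrm{loc}}$ together with its inverse, and has the $(N)$- and $(N^{\,-1})$-properties, so the hypotheses of Lemma~\ref{lem1} apply to each $g_i$ and to $f$. Applying the inequality~(\ref{eq5*AA}) to a near-optimal admissible $\rho$ for $M_p(\Gamma)$ gives $M_p(f(\Gamma))\le\mathrm{ess\,sup}_x\,K_{I,p}(x,f)\cdot M_p(\Gamma)$, reducing everything to an essential bound on the composed dilatation.

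The main obstacle is that the resulting constant must depend only on $n$ and $p$, and not on the number $m$ of elementary steps, which is governed by the geometry of $D_0$; a crude iteration across the composition only yields $c^m$. To remove this dependence I would arrange the chain so that the balls $B_i$, and in particular the transition annuli $B(y_{i-1},r)\setminus B(y_{i-1},r/2)$ where $g_i^{\,\prime}\ne I$, have overlap bounded by a constant depending only on $n$. Since $g_i$ is a pure translation on the central ball $B(y_{i-1},r/2)$, where $g_i^{\,\prime}=I$, the only contributions to $K_{I,p}(\cdot,f)$ come from these annuli, and one must show that the orbit of a.e.\ point under the composition meets only boundedly many of them, so that $K_{I,p}(x,f)$ is bounded for a.e.\ $x$ by a constant depending only on $n$ and $p$. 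Establishing this uniform essential bound is precisely the delicate point of the argument.
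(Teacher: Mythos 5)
Your overall strategy coincides with the paper's: a polygonal chain in $D_0$, a finite composition of elementary point-pushing homeomorphisms supported inside $D_0$, and Lemma~\ref{lem1} to convert a.e.\ bounds on $K_{I,p}$ into the modulus inequality (the paper pushes along each whole polygon segment with N\"akki's explicit cylinder/cone map rather than your mollified translations, which is a cosmetic difference). The genuine gap is exactly the step you flag as ``the delicate point'': it is not merely delicate, it is unattainable. No arrangement of balls, cut-offs, or overlap patterns can give $\mathrm{ess\,sup}_x\,K_{I,p}(x,f)\leqslant c(n,p)$ with $c$ independent of the number $m$ of steps, because your composed map $f$ is the identity outside a tube of radius $\sim r$ around $P$ and moves $a_0$ a distance that can be arbitrarily large compared with $r$. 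Already for $p=n$ this is impossible: take $D={\Bbb R}^n$ and $D_0$ the $r$-neighbourhood of a segment of length $L\gg r$, with $a_0,a^{\,\prime}_0$ near its endpoints. If $f$ were a homeomorphism of $\overline{{\Bbb R}^n}$, equal to the identity off $D_0$, with $f(a_0)=a^{\,\prime}_0$ and $M_n(f(\Gamma))\leqslant c(n)\,M_n(\Gamma)$ for all families, then $f$ would be quasiconformal with dilatation depending only on $n$, hence (since $f$ fixes $\infty$) $\eta$-quasisymmetric with $\eta=\eta_n$. Choosing fixed points $b$ and $c$ of $f$ with $|a_0-b|=2r$ and $|a^{\,\prime}_0-c|=2r$, one gets $|a_0-b|/|a_0-c|\leqslant 2r/(L-2r)$, while $|f(a_0)-f(b)|/|f(a_0)-f(c)|\geqslant (L-2r)/(2r)$, so quasisymmetry forces $(L-2r)/(2r)\leqslant\eta\left(2r/(L-2r)\right)\leqslant\eta(1)$, which fails once $L/r$ is large. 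So the dilatation of any such $f$ must grow with $L/r$, i.e.\ with the geometry of $(D_0,a_0,a^{\,\prime}_0)$; the same obstruction shows up concretely in your construction, where points swept along by the moving balls linger in the transition annuli for a number of steps proportional to $m$, so the product of per-step factors genuinely grows.

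The way out --- and what the paper actually does --- is to give up uniformity in the geometry. The lemma is only ever applied (in the proof of Theorem~\ref{th2}) to one fixed configuration $D_0,a_0,a^{\,\prime}_0$, for which any finite constant suffices to conclude $M_p(\Gamma(A,A^{\,*},D))>0$. Accordingly, the paper's per-step constant is $K^{0}_{n,p}=(d_1/d_0)^{p(n-1)}$, where $d_1/d_0$ is a ratio determined by the segment length and the clearance $d(L_0,\partial D_0)$, and its final constant is the product of $q+1$ such factors; despite the notation, this constant depends on the configuration, and the phrase ``depends only on $n$ and $p$'' in the statement is loose wording there just as in your write-up. If you restate your target as $M_p(f(\Gamma))\leqslant K\,M_p(\Gamma)$ with $K=K(n,p,D_0,a_0,a^{\,\prime}_0)<\infty$, your proof closes immediately: each $g_i$ is bi-Lipschitz, hence differentiable a.e., in $W^{1,p}_{\rm loc}$ together with its inverse, and with the $(N)$- and $(N^{\,-1})$-properties, so Lemma~\ref{lem1} applies to each $g_i$, and the ``crude iteration'' you dismissed, $M_p(f(\Gamma))\leqslant c^m M_p(\Gamma)$, is a perfectly valid conclusion --- it is essentially the paper's own argument with finitely many configuration-dependent factors.
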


\medskip
\begin{proof}
Let us to apply the scheme of the proof of Lemma~1.14 in
\cite{Na$_2$}. Let $L_0$ ne a polygon line which sequently joins the
points $a_0, a_1,\ldots, a_q, a^{\,\prime}_0$ in $D_0.$ Let us first
construct a mapping $f_0:\overline{D}\rightarrow \overline{D}$ such
that $M_p(f_0(\Gamma))\leqslant K^{0}_{n, p}\cdot M_p(\Gamma )$ for
each family of paths $\Gamma$ in $D$ and for some constant
$K^{0}_{n, p}>0,$ depending only on $n$ and $p,$ and $f_0(a_0)= a_1$
and $f_0(x)=x$ for all $x\in \overline{D}\setminus D_0.$

\medskip
Let $0<d_0<d(L_0, \partial D_0)/n^{1/2}.$ Without loss of
generality, we may assume that $a_0=d_0e_n$ and $a_1=d_1e_n,$
$0<d_0<d_1.$ In in the opposite case, we apply an additional
similarity transformation of the form $\varphi(x)=a+Ax,$ where $a\in
{\Bbb R}^n$ and $A$ is an orthogonal transformation such that
$\varphi(a_0)=d_0e_n$ and $\varphi(a_1)=d_1e_n,$ $0<d_0<d_1.$ For
this transformation, we obtain that $\varphi^{\,\prime}(x)=A,$ $\det
A=1$ and $l(\varphi^{\,\prime}(x))=1.$ Then $K_{I, p}(x,
\varphi)=\frac{\det A}{\lambda^p_1}=1 $ and by Lemma~\ref{lem1}
\begin{equation}\label{eq1}
M_p(\varphi(\Gamma))\leqslant \frac{\det A}{\lambda^p_1}
M_p(\Gamma)=M_p(\Gamma)\,.
\end{equation}
Due to the relation~(\ref{eq1}), it is sufficiently to construct the
corresponding mapping $f_0$ in $\varphi(D)$ and then consider the
composition $f_0\circ \varphi.$

\medskip
Set
$$C=\{x\in {\Bbb R}^n: 0\leqslant |x-x_ne_n|\leqslant d_0, \qquad 0\leqslant
x_n\leqslant d_0+d_1\}\,,$$
$$C^{\,\prime}=\{x\in {\Bbb R}^n: 0\leqslant |x-x_ne_n|\leqslant d_0-x_n,
\qquad 0\leqslant x_n\leqslant d_0\}\,,$$
$$C^{\,\prime
\prime}=\{x\in {\Bbb R}^n: 0\leqslant |x-x_ne_n|\leqslant
d_0(x_n-d_0)/d_1, \qquad d_0\leqslant x_n\leqslant d_0+d_1\}\,,$$
$$f_0(x)=\begin{cases}x\,,& x\in \overline{D}\setminus C\,,\\
x+\frac{d_1-d_0}{d_0}(d_0-|x-x_ne_n|)e_n,& x\in C\setminus
(C^{\,\prime}\cup C^{\,\prime\prime})\,, \\
x+\frac{d_1-d_0}{d_0}x_ne_n\,, &x\in C^{\,\prime}\,, \\
x+\frac{d_1-d_0}{d_1}(d_0+d_1-x_n)e_n\,, &x\in C^{\,\prime\prime}\,.
\end{cases}$$
Under the proof of Lemma~1.14 in~\cite{Na$_2$} it is proved that
$f_0$ quasiconformally maps $D$ onto itself with
$f_0(\overline{D})=\overline{D},$ while $f_0(x)=x$ for $x\in
\overline{D}\setminus D_0.$ It remains to establish the relation
$M_p(f_0(\Gamma))\leqslant K^{0}_{n, p}\cdot M_p(\Gamma)$ for any
family of paths $\Gamma$ in $D$  and some constant $K^{0}_{n, p}>0.$

\medskip
Indeed, since $f_0$ is quasiconformal, $f_0\in ACL(D).$ Set
$$\widetilde{f}_1(x)=x\,,\qquad
\widetilde{f}_2(x)=x+\frac{d_1-d_0}{d_0}(d_0-|x-x_ne_n|)e_n\,,$$
$$\widetilde{f}_3(x)=x+\frac{d_1-d_0}{d_0}x_ne_n,\qquad
\widetilde{f}_4(x)=x+\frac{d_1-d_0}{d_1}(d_0+d_1-x_n)e_n\,.$$
By the direct calculations we may show that $\Vert
\widetilde{f}_3^{\,\prime}(x)\Vert=\frac{d_1}{d_0},$ $J(x,
\widetilde{f}_3)=\frac{d_1}{d_0},$ $\Vert
\widetilde{f}_4^{\,\prime}(x)\Vert=1,$ $J(x,
\widetilde{f}_4)=\frac{d_0}{d_1},$
$l(\widetilde{f}_3^{\,\prime}(x))=1$ і
$l(\widetilde{f}_4^{\,\prime}(x))=\frac{d_0}{d_1}.$ Besides that,
$\widetilde{f}_2$ is Lipschitz mapping with a Lipschitz constant
$C=\frac{d_1}{d_0}.$ It follows from that $\Vert
\widetilde{f}_2^{\,\prime}(x)\Vert\leqslant \frac{d_1}{d_0}.$ We
also have $J(x, \widetilde{f}_2)=1.$

\medskip
Thus, $f_0\in W_{\rm loc}^{1, p}(D)$ for any $p\geqslant 1,$ because
$f_0\in ACL$ and $$\Vert
f_0^{\,\prime}(x)\Vert\leqslant\max\left\{1, \frac{d_1}{d_0},
\frac{d_0}{d_1}\right\}=\frac{d_1}{d_0}\in L^1_{\rm loc}(D)\,.$$

\medskip
Finally, $K_{I, p}(x, f)\leqslant K^{p-1}_{O, \alpha}(x, f),$ where
$$K_{O, \alpha}(x,f)\quad =\quad\left\{
\begin{array}{rr}
\frac{\Vert f^{\,\prime}(x)\Vert^{\alpha}}{|J(x,f)|}, & J(x,f)\ne 0,\\
1,  &  f^{\,\prime}(x)=0, \\
\infty, & {\rm в\,\,інших\,\,випадках}
\end{array}
\right.$$
and $\alpha=\frac{p(n-1)}{p-1}$ (see relation~(10) in \cite{Sal}).
Thus
$$K_{I, p}(x, \widetilde{f}_2)\leqslant
\left(\frac{d_1}{d_0}\right)^{\frac{p(n-1)}{p-1}\cdot
(p-1)}=\left(\frac{d_1}{d_0}\right)^{p(n-1)}\,.$$ Therefore
$$K_{I, p}(x,f_0)\leqslant
\max\left\{\left(\frac{d_1}{d_0}\right)^{p(n-1)},\frac{d_1}{d_0},\left(
\frac{d_1}{d_0}\right)^p
\right\}=\left(\frac{d_1}{d_0}\right)^{p(n-1)}:=K^{0}_{n, p}\,.$$
Obviously, the mapping $f_0$ is differentiable almost everywhere and
has $(N)$- and $(N)^{\,-1}$-properties of Luzin. Thus, by
Lemma~\ref{lem1} the mapping $f_0$ satisfies the relation
$M_p(f_0(\Gamma))\leqslant K^0_{n, p}\cdot M_p(\Gamma)$ with a
constant $K^0_{n, p}$ mentioned above.

\medskip
Similarly, for any $i=1,2,\ldots, q$ there exists a homeomorphism
$f_i:\overline{D}\rightarrow \overline{D}$ such that
$M_p(f_i(\Gamma))\leqslant K^{i}_{n, p}\cdot M_p(\Gamma)$ for any
family $\Gamma$ in $D$ and such a constant $K^{i}_{n, p}>0,$
depending only on $i,$ $n$ and $p,$ while $f(a_i)=a_{i+1}$ and
$f_i(x)=x$ for any $x\in \overline{D}\setminus D_0.$ Now we put
$f:=f_q\circ\ldots f_1\circ f_0$ which is the desired mapping.
\end{proof}

\medskip
{\it Proof of Theorem~\ref{th2}.} Since the statement of the theorem
was established for $p=n$ earlier (\cite[Lemma~1.15]{Na$_2$}), we
need to prove it for $p>n-1,$ $p\ne n.$ In the whole, we use the
scheme of the proof of~\cite[Lemma~1.15]{Na$_2$}. We may assume that
$A\cap A_*=\varnothing,$ because in this case the statement of the
Theorem is obvious. Since $A$ and $A^{\,*}$ are compact sets in
${\Bbb R}^n,$ there are $a_0\in A, a^{\,*}\in A^{\,*}$ such that
$|a_0-a^{\,*}|=d(A, A^{\,*}).$ Set
\begin{equation}\label{eq2}
2r=\min\{d(A, A^{\,*}),\, d(A^{\,*}, \partial D),\,\max\limits_{x\in
A^{\,*}}d(x, a^{\,*})\}\,.
\end{equation}
Let us choose a point $a_0^{\,\prime}\in D\setminus A^{\,*}$ such
that $d(a_0^{\,\prime}, a^{\,*})=r,$ and a subdomain $D_0$ of $D$
such that $a_0, a_0^{\,\prime}\in D_0,$ $A\cap ({\Bbb R}^n\setminus
D_0)\ne \varnothing$ and $A^{\,*}\cap D_0=\varnothing.$ Now, let $f$
be a mapping from Lemma~\ref{lem2}. Then $f(A^{\,*})=A^{\,*}.$ Let
us show that $S(a^{\,*}, t)\cap f(A)\ne\varnothing$ and $S(a^{\,*},
t)\cap A^{\,*}\ne\varnothing$ for any $r<t<2r.$ Indeed,
$a_0^{\,\prime}\in S(a^{\,*}, r)\cap f(A),$ therefore $S(a^{\,*},
r)\cap f(A)\ne\varnothing.$ Besides that, since $A\cap ({\Bbb
R}^n\setminus D_0)\ne \varnothing,$ there exists $b\in A\cap ({\Bbb
R}^n\setminus D_0).$ Since $f(x)=x$ for $x\in D\setminus D_0,$
$f(b)=b\in f(A).$ By the definition of $r$ in~(\ref{eq2}) $A\cap
({\Bbb R}^n\setminus B(a^{\,*}, 2r))\ne\varnothing.$ Now, due to
\cite[Theorem~1.I.5.46]{Ku} we obtain that $S(a^{\,*}, 2r)\cap
f(A)\ne\varnothing.$ But now, by the connectedness of $f(A),$ we
obtain that $S(a^{\,*}, t)\cap f(A)\ne\varnothing$ for all $r<t<2r$
(\cite[Theorem~1.I.5.46]{Ku}). Arguing similarly, by the definition
of $r$ in~(\ref{eq2}), we obtain that $2r\leqslant \max\limits_{x\in
A^{\,*}}d(x, a^{\,*}).$ Besides that, $B(a^{\,*}, r)\cap A^{\,*}\ne
\varnothing,$ because $a^{\,*}\in A^{\,*}.$ Now, due to
\cite[Theorem~1.I.5.46]{Ku} we obtain that $S(a^{\,*}, t)\cap
A^{\,*}\ne\varnothing$ for any $r<t<2r.$

\medskip
In this case, by Proposition~\ref{pr1} we obtain that
\begin{equation}\label{eq3}
M_p(\Gamma(f(A), f(A^{\,*}), D))\geqslant
\frac{2^nb_{n,p}}{n-p}((2r)^{n-p}-r^{n-p})\,.
\end{equation}
On the other hand, by the definition of~$f$ and by Lemma~\ref{lem2}
$$M_p(\Gamma(f(A), f(A^{\,*}), D))=$$
\begin{equation}\label{eq4}
=M_p(f(\Gamma(A), f(A^{\,*}), D))\leqslant K_{n, p}\cdot
M_p(\Gamma(A, A^{\,*}, D)\,.
\end{equation}
Uniting the relations~(\ref{eq3}) and~(\ref{eq4}) we conclude that
$$M_p(\Gamma(A, A^{\,*}, D)\geqslant \frac{1}{K_{n, p}}
\frac{2^nb_{n,p}}{n-p}((2r)^{n-p}-r^{n-p})>0\,.$$
Theorem is proved.~$\Box$

\section{Proof of Theorem~\ref{th3}}
We will carry out the proof using the approach
from~\cite[Theorem~1.16]{Na$_2$}, see also lemma~3.8.1
in~\cite{Sev$_1$}. Since the proof for the case $p=n$ is given
in~\cite[Lemma~3.8.1]{Sev$_1$}, it suffices to consider the case
$p\ne n.$ Let $U$ be an arbitrary neighborhood of the point $x_0$
and $E^{\,*}$ be a continuum in $D.$ By the condition of the Lemma,
there exists a neighborhood $V\subset D\cap U,$ a compact set
$E\subset D$ and a number $\delta>0$ such that the
relation~(\ref{eq17***}) is fulfilled. Without loss of generality,
we may assume that
\begin{equation}\label{eq21***}
E\cap E^{\,*}= \varnothing\,.
\end{equation}
Indeed, in the contrary case we may consider any continuum
$E^{\,*}_1\subset D\setminus (E\cup E^{\,*})$ and to prove this
statement firstly for $E$ and $E^{\,*}_1,$ and after that for
$E^{\,*}_1$ and $E^{\,*}.$ We may consider that $V\subset B(x_0,
t_0),$ where $t_0=\frac{1}{2}\cdot{\rm dist\,}(x_0, E).$ Let $F$ be
continuum in $D$ such that $F\cap\partial U\ne\varnothing \ne F\cap
\partial V.$ Due to~(\ref{eq21***}), we obtain that
$$4r: =\min\left\{{\rm dist\,}(E, E^{\,*}), \quad {\rm dist\,}(E, \partial D),
\quad {\rm dist\,} (E, B(x_0, t_0))\right\} > 0\,.$$
Let $E_1,\ldots, E_q$ be a finite covering of $E$ byu closed balls
centered at the points $e_i\in E_i$ and of the radius $r,$
$i=1,\ldots, q.$ Recall that, $p$-modulus of families of paths
joining two continua in $D$ is positive for $p>n-1$ (see
Theorem~\ref{th2}). Now, we set
\begin{equation}\label{eq22***}
\Gamma_i^{\,*}=\Gamma\left(E_i, E^{\,*}, D\right)\,,\qquad
M_p(\Gamma_i^{\,*})=\delta_i>0\,,
\end{equation}
$$\delta^{\,*}= 3^{\,-p}\min\left\{\delta/q, \delta_1,\ldots,
\delta_q,\right.$$
\begin{equation}\label{eq24***}
\left.\frac{2^nb_{n,p}}{n-p}((2r)^{n-p}-r^{n-p}), \\
\frac{2^nb_{n,p}}{n-p}((4r)^{n-p}-(2r)^{n-p})\right\}\,,\end{equation}
where $b_{n,p}$ is a constant from Proposition~\ref{pr1}, and
$$\Gamma=\Gamma(E, F, D)\,,\qquad \Gamma_i=\Gamma(E_i, F, D)\,,
\qquad \Gamma^{\,*}=\Gamma\left(E^{\,*}, F, D\right)\,.$$ Due to the
subadditivity of the modulus of families of paths,
\begin{equation}\label{eq26***}
0<\delta \leqslant M_p(\Gamma)\leqslant
M_p\left(\Gamma\left(\bigcup\limits_{i=1}^q E_i, F,
D\right)\right)\leqslant \sum\limits_{i=1}^q M_p(\Gamma_i)\,.
\end{equation}
By~(\ref{eq26***}) it follows that $M_p(\Gamma_{i_0})\geqslant
\delta/q$ at least for some $i_0\in \{1,\ldots,q\}.$ Now, we may
assume that
\begin{equation}\label{eq27***}
M_p(\Gamma_1)\ \geqslant \delta/q\,.
\end{equation}
Let us show that
\begin{equation}\label{eq32***}
M_p(\Gamma^{\,*})\geqslant \delta^{\,*}\,.
\end{equation}
By the definition, the modulus of families of paths, containing at
least one constant path, equals to infinity. Due to this, it is
sufficiently to consider that $E^{\,*}\cap F\ne \varnothing.$ Put
$\rho\in {\rm adm\,}\Gamma^{\,*}.$ If at least one of the conditions
\begin{equation}\label{eq28***}
\int\limits_{\gamma_1}\rho \,|dx|\geqslant 1/3\,, \qquad 
\int\limits_{\gamma_1^{\,*}}\rho\, |dx|\geqslant 1/3\,,
\end{equation}
hold for $\gamma_1\in \Gamma_1,$ $\gamma_1^{\,*}\in \Gamma_1^{\,*},$
then we obtain that $3\rho\in {\rm adm\,}\Gamma_1$ and $3\rho\in
{\rm adm\,}\Gamma_1^{\,*}.$ Now, we obtain that
\begin{equation}\label{eq31***}
\int\limits_{D} \rho^p(x)\,dm(x)\geqslant
3^{\,-p}\min\{M_p(\Gamma_1), M_p(\Gamma_1^*)\}\,.
\end{equation}
The relation~(\ref{eq31***}) together with (\ref{eq24***}),
(\ref{eq26***}) and (\ref{eq27***}) gives (\ref{eq32***}).

Assume now that, there are at least two paths $\gamma_1\in \Gamma_1$
and $\gamma_1^*\in \Gamma_1^*$ for which the relations
in~(\ref{eq28***}) do not hold. Assume firstly that~$F\cap B(e_1,
2r)=\varnothing.$ Let
$$R_1:=\{x\in {\Bbb R}^n: r<|x-e_1|<2r\}\,,\qquad
\Delta_1:=\Gamma\left(|\gamma_1|, |\gamma_1^{*}|, R_1\right)\,.$$
Since $\rho\in {\rm adm\,}\Gamma^{\,*},$ by the definition of
$\gamma_1$ and $\gamma_1^*,$
\begin{equation}\label{eq33***}
\int\limits_{\alpha_1}\rho \,|dx|\geqslant 1/3
\end{equation}
for any path $\alpha_1\in \Delta_1.$ Observe that, $|\gamma_1|\cap
S(e_1, t)\ne\varnothing\ne|\gamma_1^{\,*}|\cap S(e_1, t)$ for any
$t\in (r, 2r),$ in addition, $B(e_1, 2r)\subset D.$ Thus, by
Proposition~\ref{pr1}, we obtain that
\begin{equation}\label{eq25***}
\int\limits_{D} \rho^p(x)\,dm(x)\geqslant
3^{-p}\frac{2^nb_{n,p}}{n-p}((2r)^{n-p}-r^{n-p})\,.
\end{equation}
However, the relation~(\ref{eq25***}) together with~(\ref{eq22***})
proves~(\ref{eq32***}). Now, let us assume that $F\cap B(e_1, 2r)\ne
\varnothing.$ Let
$$R_1^{\,*}:=\{x\in {\Bbb R}^n: 2r<|x-e_1|<4r\}\,,\qquad
\Delta_1^{\,*}:=\Gamma\left(F, |\gamma_1^{*}|, R_1^{\,*}\right)\,.$$
Observe that, in this case,
\begin{equation}\label{eq34***}
\int\limits_{\alpha_1^{\,*}}\rho \,ds\geqslant 1/3
\end{equation}
for any path $\alpha_1^{\,*}\in \Delta_1^{\,*}.$ Besides that, by
the definition of $r$ and due to $V\subset B(x_0, t_0),$ $t_0={\rm
dist\,}(x_0, E) /2,$ and $F\cap\partial U\ne\varnothing \ne
F\cap\partial V,$ we obtain that $4r\leqslant
|x_1-\widetilde{x_1}|,$ where $\widetilde{x_1}\in F\cap B(x_0, t).$
Thus, by~\cite[Theorem~1.I.5.46]{Ku} the continuum $F$ intersects
$S(e_1, t)$ for any $t\in (2r, 4r.)$ By Proposition~\ref{pr1}
\begin{equation}\label{eq35***}
\int\limits_{D} \rho^p(x)\,dm(x)\geqslant
3^{-p}\frac{2^nb_{n,p}}{n-p}((4r)^{n-p}-(2r)^{n-p})\,.
\end{equation}
Theorem is proved.~$\Box$

\section{Proof of Theorem~\ref{th5}} Let $x_0\in \partial
D\setminus\{\infty\}.$ Let us apply the approach used under the
proof of~\cite[Theorem~6.4]{Na$_1$}. We will carry out the proof by
the method of the opposite. Assume that, the statement of the
theorem is not true. Then there is $r_0>0$ such that the set $U\cap
D$ has infinitely many components for any neighborhood $U$ of the
point $x_0,$ $U\subset B(x_0, r_0).$ Chose $0<r<r_0$ and a
neighborhood $U$ such that $\overline{U}\subset B(x_0, r).$ Due to
the strong accessibility of $\partial D$ at the point $x_0$ with
respect to $p$-modulus, there exists a neighborhood $V\subset U$ of
a point $x_0$ and a compact set $F\subset D$ and a number $P>0$ such
that
\begin{equation}\label{eq2A}
M_p(\Gamma(E, F, D))\geqslant P
\end{equation}
fir any continuum $E$ in $D$ such that $E\cap
\partial U\ne\varnothing\ne E\cap
\partial V.$ By Theorem~\ref{th3} we may assume that $F$ is outside of
$B(x_0, r_0).$ We also may assume that $\overline{V}\subset U.$

\medskip
Observe that, there are infinitely many components $\widetilde{E_k}$
of the set $B(x_0, r_0)\cap D$ such that $\widetilde{E_k}\cap
\partial V\ne \varnothing,$ $k=1,2,\ldots.$ Indeed, in the contrary case
$(B(x_0, r_0)\cap D)\cap V=\widetilde{E_{i_1}}\cup\ldots
\widetilde{E_{i_{k_0}}},$ $1\leqslant k_0<\infty,$ that contradicts
to the definition of $r_0.$

\medskip
Observe that, the sequence $\widetilde{E_k}$ contains a subsequence
$E_l:=\widetilde{E_{k_l}},$ $l=1,2,\ldots, $ such that
$m(\widetilde{E_{k_l}})<\frac{1}{l},$ because in the contrary case
there exists $l_0>0$ such that $m(\widetilde{E_{k}})>l_0$ for any
natural $k.$ In this case,
$$\Omega_n2^nr_0^n=m(B(x_0, r_0))\geqslant \sum\limits_{k=1}^{\infty}
m(\widetilde{E_{k}})\geqslant
\sum\limits_{k=1}^{\infty}l_0=\infty\,,$$
which is impossible.

\medskip
Below we use the index $k$ for the notation of $E_k,$ $k=1,2,\ldots
.$ Fix $x_k\in V\cap E_k$ and $y_k\in D\setminus E_k$ and let us to
consider a path $\gamma_k,$ $\gamma_k:[0, 1]\rightarrow D$ joining
$x_k$ and $y_k$ in $D,$ i.e., $\gamma_k(0)=x_k$ and
$\gamma_k(1)=y_k.$ By~\cite[Theorem~1.I.5.46]{Ku} there exists
$0<t_1<1$ such that $z_k:=\gamma_k(t_1)\in \partial E_k.$ Observe
that, $z_k\in S(x_0, r_0)\cap D$ (see, e.g., \cite[Theorem~1,
$\S\,47,$ item~III]{Ku}). Without loss of generality, we may assume
that $\gamma_k(t)\in E_k$ for any $0\leqslant t<t_1.$ Since
$U\subset \overline{B(x_0, r)}$ and $\overline{V}\subset U,$ again
due to \cite[Theorem~1.I.5.46]{Ku} there are $0<t_3<t_2<t_1$ such
that $\gamma_k(t_3)\in \partial V$ and $\gamma_k(t_2)\in \partial
U.$

\medskip
Let $F_k:=|\gamma_{k}|_{[t_3, t_2}|.$ By the construction, $F_k$
intersects $\partial U$ and $\partial V$ for any $k\in {\Bbb N}.$
Let $\Gamma_k$ be a family of paths joining $E_k$ and $F$ in $D.$
Since by the choosing $F\subset D\setminus B(x_0, r)$ and
$\overline{U}\subset B(x_0, r),$ the length of any path $\gamma\in
\Gamma_k$ is at least $${\rm dist\,}(\partial U, S(x_0,
r_0))\geqslant {\rm dist\,}(S(x_0, r), S(x_0, r_0))=r_0-r\,.$$

\medskip
Set
$$\rho(x):=\begin{cases}\frac{1}{r_0-r}\,,& x\in E_k\,,\\
0\,,& x\not\in E_k \end{cases}\,.$$
Then $\rho\in {\rm adm\,}\Gamma_k.$ Thus
\begin{equation}\label{eq12A}
M_p(\Gamma_k)\leqslant\int\limits_{E_k}\frac{dm(x)}{r^p}=\frac{1}{kr^p}
\rightarrow 0\,,\qquad k\rightarrow\infty.
\end{equation}
On the other hand, by~(\ref{eq2A})
\begin{equation}\label{eq3A}
M_p(\Gamma_k)=M_p(\Gamma(F_k, F, D))\geqslant P\,.
\end{equation}
The relations~(\ref{eq12A}) and~(\ref{eq3A}) contradict each other.
The contradiction obtained above point to finitely connectedness of
$D$ at $x_0.$ Theorem is proved.~$\Box$

CONTACT INFORMATION

\medskip
\medskip
\noindent{{\bf Evgeny Oleksandrovych Sevost'yanov} \\
{\bf 1.} Zhytomyr Ivan Franko State University,  \\
40 Velyka Berdychivs'ka Str., 10 008  Zhytomyr, UKRAINE \\
{\bf 2.} Institute of Applied Mathematics and Mechanics\\
of NAS of Ukraine, \\
19 Henerala Batiuka Str., 84 100 Slovians'k,  UKRAINE\\
esevostyanov2009@gmail.com}

\medskip
\noindent{{\bf Zarina Olimzhonivna Kovba} \\
Zhytomyr Ivan Franko State University,  \\
40 Velyka Berdychivs'ka Str., 10 008  Zhytomyr, UKRAINE \\
e-mail: mazhydova@gmail.com  }

\medskip
\noindent{{\bf Heorhii Olegovych Nosal} \\
Zhytomyr Ivan Franko State University,  \\
40 Velyka Berdychivs'ka Str., 10 008  Zhytomyr, UKRAINE \\
e-mail: nocalko@gmail.com }

\medskip
\noindent{{\bf \noindent Nataliya Ilkevych} \\
Zhytomyr Ivan Franko State University,  \\
40 Velyka Berdychivska Str., 10 008  Zhytomyr, UKRAINE \\
ilkevych1980@gmail.com}

\end{document}